\documentclass{amsart}

\usepackage{amsmath}
\usepackage{hyperref}
\usepackage{amsfonts,graphics,amsthm,amsfonts,amscd,latexsym, amssymb}


\DeclareFontFamily{U}{MnSymbolC}{}
\DeclareSymbolFont{MnSyC}{U}{MnSymbolC}{m}{n}
\DeclareFontShape{U}{MnSymbolC}{m}{n}{
    <-6>  MnSymbolC5
   <6-7>  MnSymbolC6
   <7-8>  MnSymbolC7
   <8-9>  MnSymbolC8
   <9-10> MnSymbolC9
  <10-12> MnSymbolC10
  <12->   MnSymbolC12}{}
\DeclareMathSymbol{\iprod}{\mathbin}{MnSyC}{'270}

\usepackage{epsfig}
\usepackage{flafter}
\usepackage{mathtools}
\usepackage{comment}
\usepackage{stmaryrd}

\usepackage{mathabx,epsfig}

\hypersetup{
    colorlinks=true,    
    linkcolor=blue,          
    citecolor=blue,      
    filecolor=blue,      
    urlcolor=blue           
}
\usepackage{tikz}
\usetikzlibrary{graphs,positioning,arrows,shapes.misc,decorations.pathmorphing}

\tikzset{
    >=stealth,
    every picture/.style={thick},
    graphs/every graph/.style={empty nodes},
}

\tikzstyle{vertex}=[
    draw,
    circle,
    fill=black,
    inner sep=1pt,
    minimum width=5pt,
]
\usepackage[position=top]{subfig}
\usepackage{amssymb}
\usepackage{color}

\setlength{\textwidth}{\paperwidth}
\addtolength{\textwidth}{-2in}
\calclayout

\usetikzlibrary{decorations.pathmorphing}
\tikzstyle{printersafe}=[decoration={snake,amplitude=0pt}]

\newcommand{\Cl}{\operatorname{Cl}}

\newcommand{\QQ}{\mathbb{Q}}
\newcommand{\ZZ}{\mathbb{Z}}

\newcommand{\CC}{\mathbb{C}}

\def\O#1.{\mathcal {O}_{#1}}			
\def\pr #1.{\mathbb P^{#1}}				
\def\af #1.{\mathbb A^{#1}}			
\def\ses#1.#2.#3.{0\to #1\to #2\to #3 \to 0}	
\def\xrar#1.{\xrightarrow{#1}}			
\def\K#1.{K_{#1}}						
\def\bA#1.{\mathbf{A}_{#1}}			
\def\bM#1.{\mathbf{M}_{#1}}				
\def\bL#1.{\mathbf{L}_{#1}}				
\def\bB#1.{\mathbf{B}_{#1}}				
\def\bK#1.{\mathbf{K}_{#1}}			
\def\subs#1.{_{#1}}					
\def\sups#1.{^{#1}}

\usepackage{tikz}
\usetikzlibrary{matrix,arrows,decorations.pathmorphing}

  \newtheorem{introthm}{Theorem}

  \newtheorem{theorem}{Theorem}[section]
  \newtheorem{lemma}[theorem]{Lemma}
  \newtheorem{proposition}[theorem]{Proposition}

\theoremstyle{remark}

\numberwithin{equation}{section}

\usepackage[all]{xy}

\begin{document}

\title{Orbifold Kähler-Einstein metrics on projective toric varieties}

\author[L.~Braun]{Lukas Braun}
\address{Mathematisches Institut, Albert-Ludwigs-Universit\"at Freiburg, Ernst-Zermelo-Strasse 1, 79104 Freiburg im Breisgau, Germany}
\email{lukas.braun@math.uni-freiburg.de}

\thanks{
The author is supported by the Deutsche Forschungsgemeinschaft (DFG) grant BR 6255/2-1. 
}

\subjclass[2020]{Primary 32Q20;
Secondary 14J45, 14M25, 57R18.}
\keywords{}
\maketitle

\begin{abstract}
In this short note, we investigate the existence of orbifold Kähler-Einstein metrics on toric varieties. In particular, we show that every $\QQ$-factorial normal projective toric variety allows an orbifold Kähler-Einstein metric. Moreover, we characterize $K$-stability of $\QQ$-factorial toric pairs of Picard number one in terms of the log Cox ring and the universal orbifold cover.
\end{abstract}

\setcounter{tocdepth}{1} 
\tableofcontents

\section{Introduction}
We work over the field $\CC$ of complex numbers.
In contrast to the case of negative or zero first Chern class - where Kähler-Einstein metrics are known to always exist - due to the confirmation of the Yau-Tian-Donaldson conjecture, we know that in the case of Fano manifolds, the existence of a Kähler-Einstein metric is equivalent to the algebraic notion of $K$-polystability~\cite{KE1, KE2, KE3, TianKE}. This purely smooth setting was extended in the last years  to the case of klt log Fano pairs $(X,\Delta)$ culminating in the analogous statement for such pairs~\cite[Thm.~1.6]{LXZ22}: the existence of a \emph{singular} Kähler-Einstein metric being equivalent to $K$-polystability of the pair $(X,\Delta)$. In the case of toric varieties, this is equivalent to the corresponding polytope having it's barycenter at the origin~\cite{WZ04,BB13,Ber16,BL22}.

It was conjectured in~\cite[Conj.~1]{Don12} that for non-$K$-stable Fano manifolds, a Kähler-Einstein metric with certain \emph{cone singularities} should exist. Partial results in this direction can e.g. be found in~\cite{OS15}, but there also have been found counterexamples to the original version of the conjecture~\cite[Thm.~1]{Sze}. In fact, the  counterexamples given are toric Gorenstein del Pezzo surfaces.
On the other hand, a modified version of the conjecture, see~\cite[Conj.~7.4]{BL22}, was proven in~\cite[Thm.~1.8]{LXZ22}, stating that for a log Fano pair $(X,\Delta)$, there exists $m$ and a $\QQ$-divisor $D$ in the linear system $\frac{1}{m}|-m(K_X+\Delta)|$, such that $(X,\Delta+D)$ is $K$-polystable. Before that, in~\cite[Thm.~7.10]{BL22}, the authors showed that in the toric case one can find a torus invariant boundary with these properties.

However, as Donaldson remarks~\cite{Don12}, the only singular metrics for which we know that "a great deal of the standard theory can be brought to bear`` are \emph{smooth orbifold metrics}. Singular (or weak) K\"ahler-Einstein metrics on orbifolds are smooth orbifold metrics, see~\cite{LT19}. To have an orbifold structure on our variety $X$, a necessary but not sufficient criterion is that $X$ has quotient singularities and $\Delta$ is snc on the smooth locus with so-called \emph{standard coefficients} of the form $1-\frac{1}{m}$. In the case of $\QQ$-factorial toric varieties, an orbifold structure exists if the boundary is torus invariant and has standard coefficients, see Proposition~\ref{prop:toricorbi}.

\subsection{Toric varieties with orbifold Kähler-Einstein metrics}
As mentioned above, a toric boundary with standard coefficients indeed provides an orbifold metric. Our first result says that one can always find such a boundary. 

\begin{introthm}
\label{thm:orbimetric}
Let $X$ be a normal projective toric variety. Then $X$ allows a toric boundary $\Delta$ \emph{with standard coefficients}, such that $(X,\Delta)$ is $K$-polystable. In particular, if $X$ is $\QQ$-factorial, it allows an orbifold Kähler-Einstein metric.
\end{introthm}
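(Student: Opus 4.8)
The plan is to reduce the first assertion to the combinatorial criterion recalled in the introduction: for a log Fano toric pair, $K$-polystability is equivalent to the moment polytope of $-(K_X+\Delta)$ having its barycenter at the origin. Writing the torus-invariant prime divisors as $D_\rho$ for $\rho \in \Sigma(1)$, with primitive ray generators $v_\rho$, and setting $a_\rho = 1 - 1/m_\rho$, we have $-(K_X+\Delta) = \sum_\rho c_\rho D_\rho$ with $c_\rho = 1/m_\rho$, whose polytope is $P_c = \{u \in M_\RR : \langle u, v_\rho\rangle \ge -c_\rho\}$. So the task is to produce positive integers $m_\rho$ such that $\sum_\rho c_\rho D_\rho$ is ample $\QQ$-Cartier and the barycenter of $P_c$ is the origin. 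Note that choosing the boundary simultaneously cures non-Gorenstein-ness: even when $X$ alone is not $\QQ$-Gorenstein, requiring $\sum_\rho c_\rho D_\rho$ to be $\QQ$-Cartier forces $K_X+\Delta$ to be $\QQ$-Cartier, and since all $a_\rho<1$ the pair is klt, hence log Fano.

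The engine is two elementary invariances of the barycenter condition. First, replacing $c$ by $c + \langle u_0,\,\cdot\,\rangle$ for $u_0 \in M_\RR$ translates $P_c$ by $-u_0$ and leaves the divisor class (and its support function, hence ampleness) unchanged; so within a fixed ample class I can translate the polytope to put its barycenter at the origin, pinning down a unique representative. Second, scaling $c \mapsto \lambda c$ dilates $P_c$ by $\lambda$ and therefore preserves the property ``barycenter at the origin''. The plan is thus: since $X$ is projective it carries an ample Cartier class, which I represent by a rational coefficient vector $c^0$ via a strictly convex rational support function; I then translate $P_{c^0}$ so that its barycenter sits at the origin, obtaining a coefficient vector $c^*$. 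Because the barycenter of a rational polytope is rational, $c^*$ is rational, and because the origin is now interior to $P_{c^*}$, every $c^*_\rho$ is strictly positive.

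The key step is to convert this rational barycentered solution into standard coefficients by a single rescaling. Writing $c^*_\rho = n_\rho/d$ over a common denominator $d$ with $n_\rho \in \ZZ_{>0}$, and setting $N = \operatorname{lcm}_\rho(n_\rho)$, I rescale by $\lambda = d/N$ to obtain $c_\rho = \lambda c^*_\rho = n_\rho/N = 1/m_\rho$ with $m_\rho = N/n_\rho \in \ZZ_{>0}$. Since $N$ is a common multiple of the $n_\rho$, each $m_\rho$ is a positive integer and $c_\rho = 1/m_\rho \le 1$ automatically, so the $a_\rho = 1 - 1/m_\rho \in [0,1)$ are genuine standard coefficients. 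Rescaling keeps the class ample and, by the second invariance, keeps the barycenter at the origin, while the centering translation did not change the class; hence $(X,\Delta)$ with $\Delta = \sum_\rho (1 - 1/m_\rho) D_\rho$ is log Fano with centered barycenter, i.e.\ $K$-polystable.

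For the ``in particular'' I would invoke the cited machinery: when $X$ is $\QQ$-factorial, the torus-invariant standard-coefficient boundary $\Delta$ endows $(X,\Delta)$ with an orbifold structure by Proposition~\ref{prop:toricorbi}; $K$-polystability yields a singular K\"ahler-Einstein metric by \cite[Thm.~1.6]{LXZ22}; and on an orbifold such a metric is automatically a smooth orbifold metric by \cite{LT19}, giving the orbifold K\"ahler-Einstein metric. I expect the main obstacle to be bookkeeping rather than conceptual: one must check that the centering translation and the $\operatorname{lcm}$-rescaling jointly keep all coefficients in $[0,1)$ and the class $\QQ$-Cartier ample, and that ``barycenter at the origin'' is exactly the polystability criterion in the cited singular/orbifold generality. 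The genuinely new point over \cite[Thm.~7.10]{BL22} is precisely the rationality-plus-rescaling trick, which upgrades an arbitrary $K$-polystable toric boundary to one with standard coefficients without disturbing the barycenter.
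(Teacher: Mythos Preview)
Your proposal is correct and follows essentially the same route as the paper: start with an ample rational polytope, translate by its (rational) barycenter to center it at the origin, note that the resulting coefficients are strictly positive, and then rescale by the reciprocal of the $\operatorname{lcm}$ of the numerators so that every coefficient becomes a unit fraction $1/m_\rho$; finally invoke Proposition~\ref{prop:toricorbi} and~\cite{LT19} for the orbifold K\"ahler--Einstein statement. Your write-up is in fact slightly more explicit than the paper's in isolating the two invariances (translation within a class, and scaling) and in noting why the translated coefficients remain rational.
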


While this can be deduced from the existence of \emph{some} toric boundary $\Delta'$ with $(X,\Delta')$ $K$-polystable due to~\cite[Thm.~7.10]{BL22}, our proof of Theorem~\ref{thm:orbimetric} also provides an alternative purely convex geometric proof for~\cite[Thm.~7.10]{BL22}.

\subsection{$K$-stability in terms of the (log) Cox ring.}

Since all toric varieties have a polynomial Cox ring, the \emph{grading} by the class group alone must encode the $K$-polystability of a toric variety. In~\cite{BM21}, the authors introduced the notion of the \emph{log Cox ring} of a pair $(X,\Delta)$, which is the right object to study in this context, since it takes into account the boundary $\Delta$. 
For a toric orbifold boundary $\Delta$, the \emph{log class group} $\Cl(X,\Delta)$ is the quotient of orbifold Weil divisors ($\QQ$-divisors that become integral on orbifold charts) by linear equivalence. The log Cox ring is the associated divisorial algebra. It's spectrum $\hat{X}_\Delta$ - the \emph{log characteristic space} - allows a good quotient $\hat{X}_\Delta \to X$ by the diagonalizable group $H_{(X,\Delta)}:= \mathrm{Spec}\, \CC[\Cl(X,\Delta)]$ which ramifies over $\Delta_i$ with order $m_i$. In this setting, we have the following characterization of $K$-polystability:

\begin{introthm}
\label{thm:coxchar}
Let $X$ be a $\QQ$-factorial toric variety of Picard number one and dimension $n$ and let $\Delta=\sum_{\rho \in \Sigma(1)} \left(1-1/m_{\rho}\right) D_\rho$ be a toric orbifold boundary. Then the following are equivalent:
\begin{enumerate}
    \item $(X,\Delta)$ is $K$-polystable.
    \item The barycenter of $P^\vee_{-(K_X+\Delta)}=\mathrm{conv}(m_\rho u_\rho)_{\rho \in \Sigma(1)} \subseteq N_{\QQ}$ is $0$.
    \item The orbifold universal cover of $(X_{\mathrm{reg}},\Delta)$ is $(\mathbb{P}^n,\emptyset)$.
    \item There is a subgroup $\mathbb{Z} \leq \Cl(X,\Delta)$ such that $\mathrm{Spec}\, \CC[\ZZ] \cong \CC^*$ acts with weights $(1,\ldots,1)$ on $\hat{X}_\Delta$.
\end{enumerate}
\end{introthm}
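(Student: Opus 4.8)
The plan is to reduce all four conditions to a single linear identity in the lattice $N$, namely $\sum_{\rho\in\Sigma(1)} m_\rho u_\rho = 0$, which I will call $(\star)$. First I would record the structure of the fan: since $X$ is $\QQ$-factorial of Picard number one, it is a fake weighted projective space, so $\Sigma$ is simplicial with exactly $n+1$ rays $u_0,\dots,u_n$, and these satisfy a \emph{unique} (up to positive scaling) linear relation $\sum_\rho a_\rho u_\rho = 0$ with all $a_\rho\in\ZZ_{>0}$. Because $P^\vee_{-(K_X+\Delta)}$ is the $n$-simplex with vertices $m_\rho u_\rho$, its barycenter is the average $\tfrac{1}{n+1}\sum_\rho m_\rho u_\rho$ of its vertices; hence $(2)$ holds iff $(\star)$ holds. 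Uniqueness of the relation also shows $(\star)$ is equivalent to $(m_\rho)_\rho$ being proportional to $(a_\rho)_\rho$, a reformulation I would keep in reserve for the other implications.

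For $(1)\Leftrightarrow(2)$ I would invoke the cited toric criterion: since $(X,\Delta)$ is a klt log Fano pair (coefficients $1-1/m_\rho<1$ and $-(K_X+\Delta)=\sum_\rho \tfrac1{m_\rho}D_\rho$ ample by Picard number one), it is $K$-polystable iff the barycenter of the \emph{moment} simplex $P_{-(K_X+\Delta)}\subseteq M_\QQ$ lies at the origin. It then remains to prove the elementary duality lemma: for an $n$-simplex $P$ with $0$ in its interior, $\mathrm{bc}(P)=0$ iff $\mathrm{bc}(P^\vee)=0$. I would prove this by letting $q_0,\dots,q_n$ be the vertices of $P$ and $v^{(i)}$ the polar-dual vertex dual to the facet opposite $q_i$, characterized by $\langle v^{(i)},q_j\rangle=-1$ for $j\neq i$. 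If $\sum_j q_j=0$ then $\langle v^{(i)},q_i\rangle=n$, and pairing $\sum_i v^{(i)}$ with each $q_k$ gives $n+\sum_{i\neq k}(-1)=0$; since the $q_k$ span $M_\QQ$ this forces $\sum_i v^{(i)}=0$, i.e. $\mathrm{bc}(P^\vee)=0$, and the converse is the same statement applied to $(P^\vee)^\vee=P$. Applying this to $P=P_{-(K_X+\Delta)}$ and its polar $P^\vee_{-(K_X+\Delta)}$ yields $(1)\Leftrightarrow(2)$.

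For $(2)\Leftrightarrow(3)$ I would build the orbifold cover explicitly. Taking $\tilde N=\ZZ^n$ with the standard $\PP^n$-fan, whose rays $v_0,\dots,v_n$ satisfy $\sum_\rho v_\rho=0$, the assignment $v_\rho\mapsto m_\rho u_\rho$ extends to a lattice homomorphism $\phi\colon\tilde N\to N$ precisely when $(\star)$ holds; $\phi$ is then injective of finite index, the induced toric morphism $\PP^n\to X$ is étale over the big torus and ramified of order $m_\rho$ along $D_\rho$, and since $\PP^n$ is simply connected it is the orbifold universal cover of $(X_{\mathrm{reg}},\Delta)$. Conversely, any such cover yields a lattice map $\phi$ with $\phi(v_\rho)\in\ZZ_{>0}\,u_\rho$, the branch orders force $\phi(v_\rho)=m_\rho u_\rho$, and linearity together with $\sum_\rho v_\rho=0$ gives $(\star)$. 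For $(2)\Leftrightarrow(4)$ I would use the log-Cox presentation $0\to M\xrightarrow{\psi}\ZZ^{\Sigma(1)}\to\Cl(X,\Delta)\to 0$, where $\psi(m)=(\langle m,m_\rho u_\rho\rangle)_\rho$ and the coordinate $x_\rho$ of $\hat X_\Delta$ carries the weight $[E_\rho]$ with $E_\rho=\tfrac1{m_\rho}D_\rho$. A $\CC^*\subseteq H_{(X,\Delta)}$ acting with all weights equal to $1$ is exactly a homomorphism $\Cl(X,\Delta)\to\ZZ$ sending every $[E_\rho]$ to $1$, i.e. a vector $(1,\dots,1)\in\ZZ^{\Sigma(1)}$ orthogonal to $\psi(M)$, which unwinds to $(\star)$; here one also uses that the $[E_\rho]$ generate $\Cl(X,\Delta)$, so that once $(\star)$ holds the common weight can genuinely be normalized to $1$.

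The main obstacle I expect is the precise identification in $(3)$: one must argue that the constructed finite toric cover $\PP^n\to(X_{\mathrm{reg}},\Delta)$ is the orbifold \emph{universal} cover and not merely some intermediate cover. This requires checking that it is orbifold-étale with branch data matching $\Delta$ exactly, so that no spurious orbifold structure survives over the singular locus of $X$ meeting $X_{\mathrm{reg}}$, and then invoking simple connectedness of $\PP^n$ to conclude universality. Once this identification is set up carefully, the remaining implications are the routine convex-geometric and group-theoretic reductions above, all funnelling through $(\star)$.
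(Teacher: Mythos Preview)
Your proposal is correct and largely parallels the paper's proof: the equivalence $(1)\Leftrightarrow(2)$ is obtained exactly as in the paper, via the toric barycenter criterion together with the simplex--duality lemma (stated separately in the paper as Lemma~\ref{le:simplexdual}, proved there by an explicit change of basis rather than your pairing argument), and $(2)\Leftrightarrow(3)$ is handled by the same lattice map $v_\rho\mapsto m_\rho u_\rho$ inducing a toric morphism $\PP^n\to X$ with the prescribed ramification. The one genuine difference is in $(4)$: you argue directly through the log-Cox exact sequence $0\to M\to\ZZ^{\Sigma(1)}\to\Cl(X,\Delta)\to 0$, reading off that a $\CC^*$ with all weights $1$ is precisely the orthogonality of $(1,\ldots,1)$ to $\psi(M)$, i.e.\ $(\star)$; the paper instead routes $(4)$ through $(3)$ by identifying $\pi_1^{\mathrm{orb}}(X_{\mathrm{reg}},\Delta)$ as a quotient of $\Cl(X,\Delta)$ and using the intermediate factorization $\hat X_\Delta\to\tilde X_\Delta\to X$. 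Your route is more elementary and self-contained (and in passing clarifies the slightly ambiguous phrasing of $(4)$), while the paper's makes the geometric content---that the relevant $\CC^*$-quotient \emph{is} the universal cover---explicit. Your closing concern about showing the constructed $\PP^n\to X$ is the \emph{universal} orbifold cover is precisely what the paper dispatches by invoking the log version of Theorem~12.1.10 in Cox--Little--Schenck, so no additional argument beyond that reference is required.
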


Unfortunately, this characterization breaks down for higher Picard numbers. This is partly because the dual of a polytope (which is not a simplex) with barycenter at the origin may have it's barycenter away from the origin! Another reason is that there are many

\subsection*{Acknowledgements}
The author would like to thank Harold Blum, Yuchen Liu, Yuji Odaka, Chenyang Xu, and Ziquan Zhuang for helpful remarks on a previous version of this note.

\section{Preliminaries}

\subsection{Log pairs and their singularities}

Let $X$ be a normal variety and $\Delta$ be an effective $\QQ$-divisor. We call $(X,\Delta)$ a log pair if $K_X+\Delta$ is $\QQ$-Cartier. In case $0 \leq \Delta \leq 1$, we call $\Delta$ a boundary. Then for a log resolution $f\colon Y \to X$, we define the discrepancies of $K_X+\Delta$ to be the coefficients at exceptional prime divisors of the divisor $K_Y-f^*(K_X+\Delta)$. We say that $(X,\Delta)$ is a klt pair, if $\Delta<1$ and the discrepancies are greater than $-1$. We call $(X,\Delta)$ log Fano, if it is klt and $-(K_X+\Delta)$ is ample. Moreover, we say that $X$ is of klt type (Fano type), if there exists a boundary $\Delta$ with $(X,\Delta)$ klt (log Fano). 

\subsection{Toric Geometry}
\label{sec:toric}
We follow~\cite{CLS11}. Let $X$ be a toric variety with acting torus $T$. As usual, by $M$ and $N$ we denote the dual lattices of characters and one-parameter subgroups of $T$, respectively. Then $X=X_\Sigma$ for some polyhedral fan $\Sigma $ in $N_\QQ$. Every ray $\rho$ of $\sigma$ is associated with a $T$-invariant prime divisor $D_\rho$, and these generate the group of $T$-invariant Weil divisors. We denote the primitive ray generators by $u_\rho$. Elements $m \in M_\QQ$ define $T$-invariant $\mathbb{Q}$-principal divisors $D_m$ in the following way:
$$
D_m= \sum_{\rho \in \Sigma} - \langle m, u_\rho \rangle \, D_\rho.
$$
Since the Picard group of affine toric varieties $X_\sigma$ is trivial, consequently Cartier divisors on $X_\Sigma$ are just given by collections $(m_\sigma)_{\sigma \subseteq \Sigma}$ such that $\langle m_\sigma, u_\rho \rangle = \langle m_{\tau}, u_\rho \rangle$ whenever $\rho$ is a common ray of $\sigma$ and $\tau$~\cite[Thm.~4.2.8]{CLS11}. Obviously, it suffices to specify $m_\sigma$ for the maximal cones of $\Sigma$. The situation gets even simpler if we consider \emph{ample} divisors. For those, the $m_\sigma$ are pairwise distinct and form the vertices of a convex polytope $P_D \subseteq M_\QQ$~\cite[Cor.~6.1.16]{CLS11}. Moreover, the normal fan of $P_D$ is $\Sigma$ and the vertices of the dual polytope $P_D^\vee$ are supported on the rays of $\Sigma$. If we denote such a vertex supported on $\rho$ by $v_\rho$, then the value $a_\rho$ of $D$ at $D_\rho$ is the rational number satisfying $a_\rho v_\rho = u_\rho$. In particular, $P_D^\vee$ is a lattice polytope if and only  if the $a_\rho$ are of the form $1/k$ with $k \in \ZZ$.

A $T$-invariant canonical divisor on a toric variety is given by $K_{X_\Sigma}=-\sum_{\rho \in \Sigma} D_\rho$. So for a boundary $\Delta=\sum_{\rho \in \Sigma} a_\rho D_\rho$, the vertices of the polytope $P_{-(K_X+\Delta)}^\vee$ are given by $v_\rho=\frac{1}{1-a_\rho}u_\rho$.

Depending on the needs, people in toric geometry either work with the polytope $P_D \subseteq M_\QQ$, the dual $P_D^\vee \subseteq N_\QQ$, or both. $K$-(poly/semi)stability is equivalent to the barycenter of $P_D \subseteq M_{\QQ}$ lying at the origin~\cite{WZ04,BB13,Ber16,BL22}.

\section{Proofs of the main statements}

We start with observing that a toric boundary with standard coefficients on a $\QQ$-factorial (not necessarily complete) toric variety induces an orbifold structure, a statement that should be very well known to experts but which we haven't found in the literature. 

\begin{proposition}
\label{prop:toricorbi}
Let $X$ be a normal toric $\QQ$-factorial variety and $\Delta$ a toric boundary with standard coefficients. Then the pair $(X,\Delta)$ is an orbifold.
\end{proposition}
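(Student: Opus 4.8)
The plan is to argue chart by chart over the affine toric opens $U_\sigma$ attached to the maximal cones $\sigma \in \Sigma$, which cover $X$, producing on each a finite quotient presentation whose branch data reproduces $\Delta$. Since $X$ is $\QQ$-factorial every cone is simplicial, so for a maximal $\sigma$ with rays $\sigma(1)=\{\rho_1,\dots,\rho_n\}$ the primitive generators $u_{\rho_1},\dots,u_{\rho_n}$ are linearly independent. Writing $\Delta=\sum_\rho(1-1/m_\rho)D_\rho$, I would introduce the sublattice $N_\sigma=\langle m_{\rho_1}u_{\rho_1},\dots,m_{\rho_n}u_{\rho_n}\rangle\subseteq N$. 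The vectors $m_\rho u_\rho$ form a $\ZZ$-basis of $N_\sigma$ and generate $\sigma$, so the toric variety $X_{\sigma,N_\sigma}$ of $\sigma$ taken in the lattice $N_\sigma$ is smooth, in fact isomorphic to $\CC^n$. The finite-index inclusion $N_\sigma\hookrightarrow N$ induces a finite toric morphism $\pi_\sigma\colon \CC^n\cong X_{\sigma,N_\sigma}\to U_\sigma$ realizing $U_\sigma$ as the quotient $\CC^n/G_\sigma$ by the finite abelian group $G_\sigma:=N/N_\sigma$. This is the candidate orbifold chart.

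Next I would check that the branching of $\pi_\sigma$ matches the coefficients of $\Delta$. The decisive observation is again linear independence: any lattice vector of $N_\sigma$ lying on the ray $\rho_i$ must be an integer multiple of $m_{\rho_i}u_{\rho_i}$, so the primitive generator of $N_\sigma$ along $\rho_i$ is exactly $m_{\rho_i}u_{\rho_i}$. Pulling back a local defining character then gives $\pi_\sigma^*D_{\rho_i}=m_{\rho_i}\tilde D_{\rho_i}$, where $\tilde D_{\rho_i}$ is the reduced $i$-th coordinate hyperplane; equivalently $\pi_\sigma$ ramifies to order $m_{\rho_i}$ over $D_{\rho_i}$. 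Combined with the canonical (Hurwitz) formula $K_{X_{\sigma,N_\sigma}}=\pi_\sigma^*K_{U_\sigma}+\sum_\rho(m_\rho-1)\tilde D_\rho$, this yields the crepancy identity $\pi_\sigma^*(K_{U_\sigma}+\Delta|_{U_\sigma})=K_{\CC^n}$, which is precisely the statement that $(\CC^n,0)\to(U_\sigma,\Delta|_{U_\sigma})$ is an orbifold chart of the pair, the pseudo-reflections in $G_\sigma$ accounting for the boundary.

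Finally I would glue these charts into an orbifold atlas, which I expect to be the main obstacle. Since $\Sigma$ is a fan, for two maximal cones $\sigma,\sigma'$ one has $U_\sigma\cap U_{\sigma'}=U_\tau$ with $\tau=\sigma\cap\sigma'$ a common face, and I must exhibit, around each $x\in U_\tau$, a single chart embedding into both $\pi_\sigma$ and $\pi_{\sigma'}$. Such an $x$ lies in the torus orbit $O(\tau)$, and I would compute the stabilizer in $G_\sigma$ of a preimage $\tilde x\in X_{\tau,N_\sigma}$. Setting $N_1:=N\cap\mathrm{span}_{\RR}(\tau)$, linear independence once more gives $N_1\cap N_\sigma=\langle m_\rho u_\rho:\rho\in\tau(1)\rangle$, so the stabilizer is canonically $G_\tau:=N_1/\langle m_\rho u_\rho:\rho\in\tau(1)\rangle$, manifestly independent of the ambient cone; the identical computation applies to $\sigma'$. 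The slice theorem then presents a neighbourhood of $x$ as $(\tilde W,G_\tau)$ for a $G_\tau$-invariant $\tilde W$, with $G_\tau$ acting by the intrinsic linear action on the normal space to $O(\tau)$, which depends only on $\tau$ and the $m_\rho$. Hence the slice charts extracted from $\sigma$ and from $\sigma'$ are isomorphic and furnish the required common refinement, establishing compatibility. The technical heart of the argument is thus this identification of the local orbifold group and its linear action at points of $O(\tau)$ as intrinsic data, for which the repeated use of linear independence of the $u_\rho$ — that is, simpliciality, that is, $\QQ$-factoriality — is essential.
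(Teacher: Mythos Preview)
Your argument is correct and follows the same underlying idea as the paper: produce, over each maximal simplicial cone, a finite abelian toric cover from $\CC^n$ that ramifies to order $m_\rho$ along $D_\rho$. The paper, however, packages this in one sentence by invoking the \emph{log Cox construction} of \cite{BM21} for the affine piece $U_\sigma$ and citing \cite[Ch.~3.3]{CLS11} for the ramification behaviour, without discussing gluing at all. Your route is more elementary and self-contained: you build the chart directly from the sublattice $N_\sigma=\langle m_\rho u_\rho\rangle$, verify the ramification index by identifying the primitive generator of each ray in $N_\sigma$, and then carry out the compatibility check by computing the isotropy at points of $O(\tau)$ and showing it equals the intrinsic group $G_\tau=N_1/\langle m_\rho u_\rho:\rho\in\tau(1)\rangle$. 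What this buys you is an honest orbifold atlas rather than a collection of local quotient presentations, at the cost of a longer argument; the paper's version is shorter but leans on external references and leaves the atlas axioms implicit.
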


\begin{proof}
Let $X=X_\Sigma$. We have to show that locally $(X,\Delta=\sum_{\rho \in \Sigma(1)} (1-1/m_\rho)D_\rho)$ is a finite quotient ramifying over $D_\rho$ of order $m_\rho$. For the toric canonical divisor $K_X$, the formula from Section~\ref{sec:toric} gives
$$
-(K_X+\Delta)=\sum_{\rho \in \Sigma(1)} \frac{1}{m_\rho} D_\rho.
$$
Take a maximal cone $\sigma \subseteq \Sigma$ with extremal rays $\rho_1,\ldots, \rho_n$ and corresponding primitive ray generators $v_1,\ldots,v_n$. Here $n=\mathrm{dim}(X)$ by $\QQ$-factoriality. The toric log Cox construction, see~\cite[Section~3.1]{BM21}, is given by the map of lattices $\ZZ^n \to N; e_i \mapsto v_i$, i.e. by the multiplication with the matrix $P$ having the $v_i$ as columns. In particular, the grading of the log Cox ring $\CC[x_1,ldots,x_n]$ is given by the matrix $Q$ Gale dual to $P$~\cite[Section~2.2]{ADHL15}. This is a toric morphism from a smooth variety - in particular a finite quotient by a finite abelian group - which ramifies over $D_\rho$ of the right order $m_\rho$, see~\cite[Chapter~3.3]{CLS11}. 
\end{proof}

\begin{proof}[Proof of Theorem~\ref{thm:orbimetric}]
Let $X=X_\Sigma$ be a normal projective toric variety. Choose some ample toric $\QQ$-divisor $L=\sum a_\rho D_\rho$. Since $L$ is ample, it corresponds to a full dimensional rational convex polytope 
$$
P_L = \{ u \in M_\QQ ~|~ \langle u, v_\rho \rangle \geq -a_\rho  \, \forall \rho \in \Sigma(1) \}  \subseteq M_\QQ,
$$
not necessarily containing the origin. We denote by $u_{P_L}$ the barycenter of $P_L$. Since $P_L$ is full dimensional and convex, we have  $u_{P_L} \in P_L^\circ$. Now denote by $P'$ the translation of $P_L$ by $-u_{P_L}$:
$$
P':=P_L -u_{P_L} = \{ u \in M_\QQ ~|~ \langle u, v_\rho \rangle \geq -a_\rho + \langle u_{P_L},v_\rho \rangle \,  \forall \rho \in \Sigma(1) \}.
$$
The polytope $P'=P_{L'}$ has it's barycenter at the origin and corresponds to an ample $\QQ$-divisor $L':=\sum (a_\rho - \langle u_{P_L},v_\rho \rangle)D_\rho$, which, since $u_{P_L}$ was in the interior of $P_L$, now is effective and fully supported on $\sum D_\rho$. That is, the coefficients $b_\rho:=(a_\rho - \langle u_{P_L},v_\rho \rangle)$ are strictly positive rational numbers. We write $b_\rho=p_\rho/q_\rho$ with natural numbers $p_\rho$ and $q_\rho$ and denote $r:=\mathrm{lcm}(p_\rho)_{\rho \in \Sigma(1)}$ and $m_\rho:=l \cdot b_\rho^{-1}$. Then scaling $P_{L'}$ by $1/l$ yields another polytope
$$
P'':=\frac{1}{l}P'=\{ u \in M_\QQ ~|~ \langle u, v_\rho \rangle \geq -1/m_\rho \,  \forall \rho \in \Sigma(1) \},
$$
still having it's barycenter at the origin and corresponding to the ample divisor $L''$. Writing
$$
L''=\sum_{\rho \in \Sigma(1)} \frac{1}{m_\rho} D_\rho = -( K_X + \underbrace{\sum_{\rho \in \Sigma(1)} (1-\frac{1}{m_\rho})D_\rho}_{=: \Delta} ) 
$$
yields a $K$-polystable pair $(X,\Delta)$ with $\Delta$ having standard coefficients. This proves the first statement of the Theorem. The second statement then follows from Proposition~\ref{prop:toricorbi} and e.g. the considerations in~\cite{LT19}.
\end{proof}

The following is another easy but useful observation concerning barycenters of dual simplices, that we haven't found in the literature either.

\begin{lemma}
\label{le:simplexdual}
Let $P \subseteq \QQ^n$ be a simplex. Then $b_P=0$ if and only if $b_{P^\vee}=0$.
\end{lemma}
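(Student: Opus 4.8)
The plan is to run everything through barycentric coordinates, in which both the hypothesis and polar duality become transparent for a simplex. Write the vertices as $v_0,\dots,v_n\in\QQ^n$; since for a simplex the barycenter coincides with the average of the vertices, $b_P=\frac{1}{n+1}\sum_{i=0}^n v_i$. Note first that $P^\vee$ only makes sense when $0$ lies in the interior of $P$, and that $b_P=0$ already forces this (the barycenter is interior). Because $P^\vee$ is again a simplex and $(P^\vee)^\vee=P$, the asserted equivalence is symmetric in $P$ and $P^\vee$: assuming the implication $b_P=0\Rightarrow b_{P^\vee}=0$, the reverse direction follows by applying it to $Q:=P^\vee$. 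So it suffices to prove this one implication.

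First I would reformulate the hypothesis. Let $\lambda_0,\dots,\lambda_n$ be the barycentric coordinate functions, i.e. the affine functions $\lambda_i(x)=\langle c_i,x\rangle+a_i$ with $\lambda_i(v_j)=\delta_{ij}$, so that $P=\{x:\lambda_i(x)\ge 0 \text{ for all } i\}$ and $a_i=\lambda_i(0)$. The identity $\sum_i\lambda_i\equiv 1$ splits into its linear and constant parts as
\[
\sum_{i=0}^n c_i = 0, \qquad \sum_{i=0}^n a_i = 1.
\]
Since $(a_0,\dots,a_n)$ are exactly the barycentric coordinates of the origin and these are unique, the condition $b_P=0$, i.e. $\sum_i v_i=0$, is equivalent to $a_i=\tfrac{1}{n+1}$ for every $i$. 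This is the step where the simplex hypothesis genuinely enters.

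Next I would identify the vertices of $P^\vee$. The facet of $P$ opposite $v_i$ lies on $\{\lambda_i=0\}=\{\langle -c_i/a_i,\,x\rangle=1\}$, where $a_i>0$ because $0$ is interior; hence under polar duality (the sign convention for $P^\vee$ is immaterial here) the dual vertex is $w_i=-c_i/a_i$ and $P^\vee=\mathrm{conv}(w_0,\dots,w_n)$. Assuming $b_P=0$, all $a_i=\tfrac{1}{n+1}$ coincide, so $w_i=-(n+1)c_i$ and therefore
\[
b_{P^\vee}=\frac{1}{n+1}\sum_{i=0}^n w_i = -\sum_{i=0}^n c_i = 0
\]
by the linear-part identity above. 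Together with the symmetry noted at the outset this proves the lemma.

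The only real obstacle is keeping track of the normalizations $1/a_i$: the dual vertices are $w_i=-c_i/a_i$, and although $\sum_i c_i=0$ holds unconditionally, $\sum_i w_i=-\sum_i c_i/a_i$ need \emph{not} vanish when the $a_i$ differ. The crux is that $b_P=0$ forces all $a_i$ equal, allowing the common factor to be pulled out of the sum. This is exactly what breaks for non-simplicial $P$ — there the barycenter is not the vertex average and $b_P=0$ does not equalize the facet data — consistent with the remark following Theorem~\ref{thm:coxchar}.
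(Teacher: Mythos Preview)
Your proof is correct. Both arguments reduce to one direction via $(P^\vee)^\vee=P$ and then verify that the dual vertices sum to zero, but the executions differ: the paper first applies a linear change of coordinates sending $P$ to the standard simplex $\mathrm{conv}(e_1,\dots,e_n,-\sum_i e_i)$, writes down the $n+1$ dual vertices explicitly, and checks $\sum_k b_k=0$ by inspection, whereas you work intrinsically with the barycentric affine functionals $\lambda_i=\langle c_i,\cdot\rangle+a_i$. Your route makes the mechanism visible---the identity $\sum_i c_i=0$ holds for any simplex containing the origin, and $b_P=0$ is precisely what equalises the constants $a_i$ so that the normalisations $1/a_i$ can be factored out of $\sum_i w_i$---and this immediately explains, as you note, why the statement breaks for non-simplices. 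The paper's explicit coordinate computation is marginally quicker but hides this point; your argument also adapts without change to either sign convention for $P^\vee$.
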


\begin{proof}
Since $(P^\vee)^\vee=P$, we only have to prove that $b_{P^\vee}=0$ if $b_P=0$. So, assuming $b_P=0$ and applying a change of basis, we are in the situation that the vertices of $P$ are $a_i=e_i$ for $1\leq i \leq n$ (where  $(e_i)_i$ is the standard basis), and $a_{n+1}=-\sum e_i$.

The facets of $P^\vee$ are given by the $n+1$ hyperplanes $\{x_i=-1\}$ ($1\leq i \leq n$) and $\{\sum x_i=1 \}$. Thus in the dual basis $(e^i)_i$,  the vertices of $P^\vee$ are given by
$$
b_k:=n e^k + \sum_{k \neq i=1}^n - e^i
\,
\mathrm{~for~}
\,
1\leq k \leq n,
\, 
\mathrm{~and~}
\,
b_{k+1}:= \sum_{i=1}^n - e^i.
$$
Thus $(n+1)b_{P^\vee}=\sum_{k=1}^{n+1} b_k=0$ and the claim is proven.
\end{proof}

\begin{proof}[Proof of Theorem~\ref{thm:coxchar}]
The equivalence of $(1)$ and $(2)$   follows from~\cite[Thm.~1.2]{BB13} and Lemma~\ref{le:simplexdual}.

Now assume that $(2)$ holds, i.e. the barycenter of $P:=\mathrm{conv}(m_\rho u_\rho)$ is zero, where $\Delta=\sum (1-1/m_\rho) D_\rho$ and $u_\rho$ are the primitive lattice generators of $\Sigma_X$. Choose some numbering $\rho_0,\ldots,\rho_n$ of the columns of $\Sigma_X$. Then the matrix with columns $m_{\rho_1} u_{\rho_1},\ldots,m_{\rho_n} u_{\rho_n} $ yields a lattice homomorphism (and a vector space isomorphism) which - since $b_p=0$ -  maps the cones of the fan $\Sigma_{\mathbb{P}^n}$ to the cones of $\Sigma_X$ and thus by~\cite[Thm.~3.3.4]{CLS11} yields a toric morphism $\mathbb{P}^n \to X$. This morphism ramifies over $D_\rho$ exactly with order $m_\rho$ and thus due to (the log version of)~\cite[Thm.~12.1.10]{CLS11} corresponds to the orbifold universal cover of $(X,\Delta)$. So $(3)$ follows from $(2)$. 

Again by~\cite[Thm.~12.1.10]{CLS11}, since $\pi_1^{\mathrm{orb}}(X_{\mathrm{reg}},\Delta)=N/N_{(\Sigma,\Delta)}$, this group is a quotient of $\mathrm{Cl}(X,\Delta)$ by some subgroup $H$, such that $\hat{X}_\Delta /\!/ H = \Tilde{X}_\Delta$ is the orbifold universal cover of $(X,\Delta)$.
Now this group is isomorphic to $\ZZ$ and acts with weights $(1,\ldots,1)$ if and only if $\Tilde{X}_\Delta=(\mathbb{P}^n,\emptyset)$. So $(3)$ and $(4)$ are equivalent.

Finally assume that $(3)$ holds. Then the covering $\mathbb{P} \to X$ is toric and again by~\cite[Thm.~3.3.4]{CLS11} yields a lattice homomorphism (and a vector space isomorphism) mapping the cones of  $\Sigma_{\mathbb{P}^n}$ to the cones of $\Sigma_X$. This homomorphism maps the barycenter of $P^\vee_{\mathbb{P}^n}$ - which is the origin - to the barycenter of $P^\vee_{-(K_X+\Delta)}$, which therefore is the origin. Thus $(2)$ follows from $(3)$ and the claim is proven. 
\end{proof}

\bibliographystyle{habbrv}
\bibliography{bib}

\end{document}